\tikzset{bullet/.style={
shape = circle,fill = black, inner sep = 0pt, outer sep = 0pt, minimum size = 0.35em, line width = 0pt, draw=black!100}}
\tikzset{circle/.style={
shape = circle,fill = none, inner sep = 0pt, outer sep = 0pt, minimum size = 0.35em, line width = 1pt, draw=black!100}}
\tikzset{rectangle/.style={
shape = rectangle,fill = white, inner sep = 0pt, outer sep = 0pt, minimum size = 0.35em, line width = 0pt, draw=black!100}}
\tikzset{empty/.style={
shape = circle,fill = white, inner sep = 0pt, outer sep = 0pt, minimum size = 0.35em, line width = 0pt, draw=white!100}}
\tikzset{xmark/.style={
shape = x,fill = white, inner sep = 0pt, outer sep = 0pt, minimum size = 0em, line width = 0pt, draw=white!100}}
\tikzset{longrectangle/.style={
inner sep = 1em,
rectangle,
minimum size=1em,
very thick,
draw=black!100, 
}}
\tikzset{label distance=-0.15em}
\tikzset{font=\scriptsize}
\newtheorem{theorem}{Theorem}[section]
\newtheorem{corollary}[theorem]{Corollary}
\newtheorem*{TheoremQHB}{Theorem \ref{theorem:embedded-QHB}}
\newtheorem*{TheoremQBLUP}{Theorem \ref{theorem:non-trivial-QBLUP}}
\theoremstyle{definition}
\newtheorem{definition}[theorem]{Definition}
\newtheorem{remark}[theorem]{Remark}
\numberwithin{equation}{section}
\begin{document}

\title[Rational homology balls in $2$-handlebodies]{Rational homology balls in $2$-handlebodies}

\author{Heesang Park}

\address{Department of Mathematics, Konkuk University, Seoul 05029, Korea}

\email{HeesangPark@konkuk.ac.kr}

\author{Dongsoo Shin}

\address{Department of Mathematics, Chungnam National University, Daejeon 34134, Korea}

\email{dsshin@cnu.ac.kr}

\subjclass[2010]{57R40, 57R55}

\keywords{$2$-handlebody, knot, rational blow-down, rational homology ball}

\date{June 11, 2016 / revised: February 19, 2017, June 22, 2017, July 2, 2017}

\begin{abstract}
We prove that there are rational homology balls $B_p$ smoothly embedded in the $2$-handlebodies associated to certain knots. Furthermore we show that, if we rationally blow up the $2$-handlebody along the embedded rational homology ball $B_p$, then the resulting $4$-manifold cannot be obtained just by a sequence of ordinary blow ups from the $2$-handlebody under a certain mild condition.
\end{abstract}

\maketitle


\section{Introduction}

In the theory of smooth $4$-manifolds, the rational blow-down (defined by Fintushel-Stern~\cite{Fintushel-Stern-1997}) is a useful tool for producing exotic smooth structures: We first cut out a plumbing manifold $C_p$ smoothly embedded in a given $4$-manifold $M$ and we then paste a rational homology ball $B_p$ along the boundary $\partial C_p (= \partial B_p$) to obtain a new $4$-manifold $\widetilde{M} = (M - C_p) \cup_{\partial C_p} B_p$. We recall briefly the definitions of $C_p$ and $B_p$ in Section~\ref{section:notions}. The rational blow-down surgery increases the signature while it keeps $b_2^+$ fixed. So it and its generalization (J. Park~\cite{JPark-1997}) are very useful to construct many important exotic $4$-manifolds with small Euler numbers; refer J. Park~\cite{JPark-2005}, Stipsicz-Szab\'o~\cite{Stipsicz-Szabo-2005}, J. Park-Stipsicz-Szab\'o~\cite{JPark-Stisicz-Szabo-2005}, for instance.

On the other hand, as a reverse surgery, the \emph{rational blow-up} is defined by $\overline{M}=(M-B_p) \cup_{\partial C_p} C_p$ for a $4$-manifolds $M$ where $B_p$ is smoothly embedded. The rational blow-down would also give an intriguing performance for constructing interesting $4$-manifolds. The rational blow-up will decrease the signature while it keeps $b_2^+$ fixed. Therefore it would be useful to construct minimal $4$-manifolds with $c_1^2 < 0$ for example.

In this paper we first show in Theorem~\ref{theorem:embedded-QHB} that Fintushel-Stern's rational homology ball $B_p$ is smoothly embedded into the special $2$-handlebody $M(p,m)$ associated to a certain knot $K(p,m)$, where the knot $K(p,m)$ and the 2-handlebody $M(p,m)$ are defined in Definition~\ref{definition:knot-2handlebody}. We then show in Corollary~\ref{corollary:non-trivial-QBLUP} that the rational blow-up along $B_p$ in $M(p,m)$ is not a sequence of ordinary blow-ups of $M(p,m)$.

So it would be an intriguing problem to find or construct $4$-manifolds $X$ containing $M(p,m)$ associated the knot $K(p,m)$ and to investigate what happens after rationally blowing-up $B_p$ in $X$ in case of the resulting manifold is not a sequence of blow-ups. We leave it for further studies.

\subsection{Smoothly embedded rational homology balls}

At first, the knot $K(p,m)$ and the 2-handlebody $M(p,m)$ are defined as follows:

\begin{definition}\label{definition:knot-2handlebody}
For $p \ge 2$ and $m \in \mathbb{Z}$, we denote by $K(p,m)$ the knot defined by Figure~\ref{figure:Knot}. The \emph{$2$-handlebody $M(p,m)$ associated to the knot $K(p,m)$} is defined by attaching a $2$-handle to $D^4$ along $K(p,m)$ in $\partial D^4$ with framing $p^2m-p-1$.
\end{definition}

\begin{figure}[ht]
\centering
\includegraphics[scale=0.8]{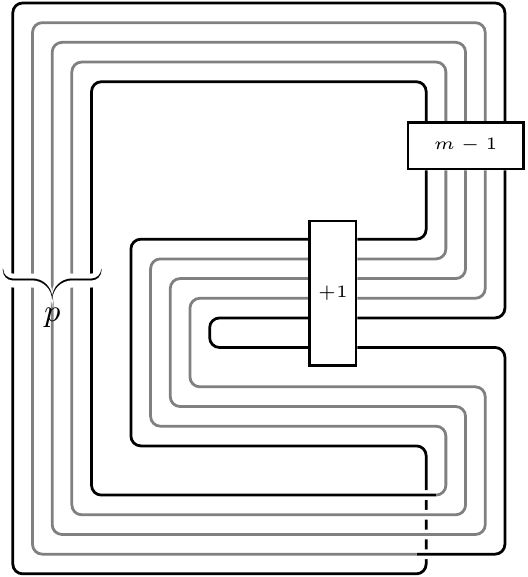}
\caption{The knot $K(p,m)$ ($p \ge 2$, $m \in \mathbb{Z}$)}
\label{figure:Knot}
\end{figure}

\begin{remark}\label{remark:referee}
The knot $K(p,m)$ is the twisted torus knot $T(p,p(m-1)+1)_{p-1,1}$ defined in Callahan-Dean-Weeks~\cite{Callahan-Dean-Weeks-1999} and Dean~\cite{Dean-1996}. According to Vafaee~\cite[Corollary~3.2]{Vafaee-2015} for example, the knot $K(p,m)$ is isotopic to a torus knot $T(p,mp-1)$. Hence the boundary of $2$-handlebody $M(p,m)$ with framing $p^2m-p-1$ is the lens space $L(p(mp-1)-1,p^2)$.
\end{remark}

One of the main theorems is the following:

\begin{theorem}\label{theorem:embedded-QHB}
For any $p \ge 2$ and $m \in \mathbb{Z}$, there is the smoothly embedded rational homology ball $B_p$ in the $2$-handlebody $M(p,m)$ associated to the knot $K(p,m)$.
\end{theorem}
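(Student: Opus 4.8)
The plan is to exhibit the Fintushel--Stern rational homology ball $B_p$ as a sub-handlebody of $M(p,m)$: I would produce a handle decomposition of $M(p,m)$ whose lower stage is exactly $B_p$, so that $M(p,m)$ is obtained from $B_p$ by attaching further handles along $\partial B_p$. Then $B_p\subset M(p,m)$ is a codimension-$0$ submanifold and the theorem follows. The construction is guided by the surgery description of $M(p,m)$. By Remark~\ref{remark:referee} the attaching knot $K(p,m)$ is the torus knot $T(p,q)$ with $q=mp-1$, and since
\[ p^2m-p-1=p(mp-1)-1=pq-1, \]
the manifold $M(p,m)$ is the trace of $(pq-1)$-surgery on $T(p,q)$, which is consistent with Remark~\ref{remark:referee} identifying $\partial M(p,m)$ as a lens space. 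It is from the $p$-strand torus-braid part of $K(p,m)$---together with the full twist on the $p-1$ strands visible in Figure~\ref{figure:Knot}---that I expect to split off $B_p$.

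Concretely, I would first recall the standard handle picture of $B_p$ (Fintushel--Stern~\cite{Fintushel-Stern-1997}): a single $1$-handle together with a single $2$-handle whose attaching circle runs over the $1$-handle with winding number $p$ (so that $H_1(B_p)\cong\mathbb{Z}/p$), with a prescribed framing giving $\partial B_p=\partial C_p$, the lens space $L(p^2,p-1)$ up to orientation. On the other side I would draw $M(p,m)$ from Definition~\ref{definition:knot-2handlebody} using the twisted-torus-knot presentation of Figure~\ref{figure:Knot}, and then introduce a cancelling $1$-/$2$-handle pair. Introducing such a pair does not change the diffeomorphism type of $M(p,m)$, but it supplies the extra $1$-handle needed to recognize the wrapping pattern of the $2$-handle of $B_p$.

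The heart of the argument is a sequence of Kirby moves (handle slides and isotopies) that reorganizes this enlarged diagram of $M(p,m)$ so that the new $1$-handle and one $2$-handle form precisely the diagram of $B_p$, while the remaining $2$-handle becomes attached to $\partial B_p$. Equivalently, I would exhibit an embedded $L(p^2,p-1)$ as a separating hypersurface in the interior of $M(p,m)$ bounding $B_p$ on one side, writing $M(p,m)=B_p\cup_{L(p^2,p-1)}N$ for a complementary piece $N$. I note that this is the same as realizing $M(p,m)$ as a rational blow-down: if $Y$ denotes the $4$-manifold obtained from $M(p,m)$ by replacing $B_p$ with the linear plumbing $C_p$, then $M(p,m)$ is the rational blow-down of $Y$ along $C_p$, and the embedding of $B_p$ is built into this description.

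The main obstacle is the Kirby-calculus bookkeeping. The delicate points are: matching the framing $p^2m-p-1$ and the $(p-1,1)$-twist region of $K(p,m)$ against the $p$-fold winding and the framing of the $2$-handle of $B_p$; verifying that after the slides the leftover attaching circle genuinely lies in the copy of $\partial B_p=L(p^2,p-1)$, so that the decomposition $M(p,m)=B_p\cup N$ is honest rather than merely homological; and checking that a single scheme works uniformly for all $m\in\mathbb{Z}$ and all $p\ge 2$, with the integer $m$ affecting only the complementary piece $N$ and not the sub-handlebody $B_p$. Keeping careful track of signs, orientations, and the linking numbers produced by the torus braiding throughout these moves is where the real work lies.
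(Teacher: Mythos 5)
Your proposal is essentially the paper's proof: the paper exhibits $M(p,m)$ by a Kirby diagram whose lower stage is exactly $B_p$ (a dotted circle plus a $(p-1)$-framed $2$-handle wrapping it $p$ times) together with one extra $m$-framed $2$-handle, and verifies that this diagram gives $M(p,m)$ by a $p$-fold handle slide of the $(p-1)$-framed handle over the $m$-framed handle followed by cancelling the $1$-handle against the $m$-framed $2$-handle, which yields precisely the $(p^2m-p-1)$-framed knot $K(p,m)$. This is your plan run in the opposite direction---where you introduce a cancelling $1$-/$2$-handle pair in the diagram of $M(p,m)$ and slide until $B_p$ appears, the paper starts from the diagram containing $B_p$ and slides/cancels down to $K(p,m)$---so the two arguments coincide.
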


We prove it in Section~\ref{section:B_p}. As a corollary, one can detect a rational homology ball $B_p$ embedded in a given $4$-manifold $X$ by looking at its Kirby diagram. Precisely:

\begin{corollary}\label{corollary:detecting-B_p}
If a Kirby diagram of $X$ contains the knot $K(p,m)$ with framing $(p^2m-p-1)$ such that no dotted circles representing a $1$-handle are linked with $K(p,m)$, then $B_p$ is embedded in $X$.
\end{corollary}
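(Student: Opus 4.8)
The plan is to exhibit a smoothly embedded copy of the $2$-handlebody $M(p,m)$ inside $X$ and then invoke Theorem~\ref{theorem:embedded-QHB} to place $B_p$ inside this copy; the essential point is that the construction of $B_p$ is local near $K(p,m)$ and therefore insensitive to the rest of $X$. First I would fix the handle decomposition of $X$ read off from the given Kirby diagram: a single $0$-handle $D^4$, the $1$-handles recorded by the dotted circles, the distinguished $2$-handle $h$ attached along $K(p,m)$ with framing $p^2m-p-1$, and possibly further handles of index at least $2$.

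Next I would use the hypothesis that no dotted circle is linked with $K(p,m)$. Geometrically this lets me isotope $K(p,m)$ in $\partial D^4 = S^3$ so that it lies in a $3$-ball disjoint from the attaching balls of all the $1$-handles; equivalently, $K(p,m)$ together with a tubular neighborhood carrying its framing sits in the portion of $\partial(D^4 \cup \text{$1$-handles})$ inherited unchanged from $\partial D^4$. Since the framing integer $p^2m-p-1$ is then measured relative to the Seifert framing in $S^3$, with no correction coming from the $1$-handles, the $2$-handle $h$ is attached exactly as in Definition~\ref{definition:knot-2handlebody}. Consequently the sub-handlebody $N := D^4 \cup h$ is diffeomorphic to $M(p,m)$, and because the interiors of distinct handles are disjoint and $h$ is glued only along a region lying in $\partial D^4$, the set $N$ is a compact codimension-$0$ smooth submanifold of $X$ with boundary $\partial N = \partial M(p,m)$.

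Finally I would conclude by composition. By Theorem~\ref{theorem:embedded-QHB} there is a smoothly embedded rational homology ball $B_p \subset M(p,m) \cong N$; composing this embedding with the inclusion $N \hookrightarrow X$ yields a smooth embedding $B_p \hookrightarrow X$, as desired.

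The step I expect to be the main obstacle is verifying that $N = D^4 \cup h$ genuinely embeds in $X$ as a submanifold independent of the remaining handles. This is precisely where the unlinking hypothesis is essential: one must check, via a standard isotopy (and if necessary handle slides that do not involve $K(p,m)$), that both the attaching circle and its framing push entirely into $\partial D^4$ away from the feet of the $1$-handles, so that the higher-index handles are attached along the complementary part of the boundary and do not disturb $N$. Once this separation is arranged, the remainder is a routine appeal to the fact that a sub-collection of handles closed under the relation ``is attached to'' forms a smooth codimension-$0$ submanifold.
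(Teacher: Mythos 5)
Your proposal is correct and is exactly the argument the paper intends: the corollary is stated as an immediate consequence of Theorem~\ref{theorem:embedded-QHB}, the point being that the unlinking hypothesis guarantees the $0$-handle together with the $2$-handle attached along $K(p,m)$ forms a sub-handlebody of $X$ diffeomorphic to $M(p,m)$ (the framing being measured in $S^3$ with no $1$-handle corrections), into which $B_p$ embeds. Your writeup merely makes explicit the standard handle-reordering details that the paper leaves implicit, so there is nothing to correct.
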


In Khodorovskiy~\cite{Khodorovskiy-2014}, she proves that the rational homology ball $B_p$ is smoothly embedded in a neighborhood of a sphere $S^2$ with self-intersection number $(-p-1)$. Theorem~\ref{theorem:embedded-QHB} may be regarded as a generalization of her results because $K(p,m)$ is an unknot for $m=0$. In PPS~\cite{PPS-2016}, the authors with J. Park generalizes Khodorovskiy~\cite{Khodorovskiy-2014} for general rational homology balls $B_{p,q}$ by using techniques developed in HTU~\cite{Hacking-Tevelev-Urzua} from the minimal model program for 3-folds in algebraic geometry. They proves the existence of rational homology balls $B_{p,q}$ smoothly embedded in the plumbing of disk bundles over spheres according to a certain linear graph which is roughly speaking half of the negative-definite plumbing graph associated to $C_{p,q}$. Recently, Owens~\cite{Owens-2017} further generalizes theorems in Khodorovskiy~\cite{Khodorovskiy-2014} and PPS~\cite{PPS-2016} and gives relatively simple topological proofs of those theorems.

But the embeddings of the rational homology balls in Khodorovskiy~\cite{Khodorovskiy-2014}, PPS~\cite{PPS-2016}, Owens~\cite{Owens-2017} are \emph{simple}; that is, the rational blow-ups of their embedded rational homology balls are just sequence of the ordinary blow-ups. It implies that their rational homology balls are not useful in general for constructing interesting 4-manifolds.

\subsection{Rational blow-up surgery}

In contrast, we show that the embedding of $B_p$ in Theorem~\ref{theorem:embedded-QHB} is \emph{not} simple under certain mild conditions in Corollary~\ref{corollary:non-trivial-QBLUP}. For this we first show:

\begin{theorem}\label{theorem:non-trivial-QBLUP}
Let $\widetilde{M}(p,m)$ be the rationally blown-up $4$-manifold along $B_p$ from $M(p,m)$. Then $\widetilde{M}(p,m)$ is the plumbing manifold with the plumbing graph in Figure~\ref{figure:After-Q-BLUP}.
\end{theorem}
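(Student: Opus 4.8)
The plan is to make the rational blow-up surgery explicit at the level of Kirby diagrams and then to read off the plumbing graph. Recall that $\widetilde{M}(p,m)=(M(p,m)-B_p)\cup_{\partial C_p}C_p$, so the task splits into two parts: understanding the handle decomposition that the embedding of $B_p$ from Theorem~\ref{theorem:embedded-QHB} induces on $M(p,m)$, and then replacing the copy of $B_p$ by the linear plumbing $C_p$. Since $\partial C_p=\partial B_p=L(p^2,1-p)$, the gluing is prescribed up to the diffeomorphism of this lens space that is already fixed by the construction of $B_p$; I would reuse whatever explicit handle picture of $B_p\hookrightarrow M(p,m)$ was produced in Section~\ref{section:B_p}, since that is where the required framing and linking data live.

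First I would exhibit $M(p,m)$ with a handle decomposition in which the embedded $B_p$ appears as an identifiable sub-handlebody — concretely, a $1$-handle together with a $2$-handle attached so that the pair has boundary $L(p^2,1-p)$. The natural tool is to blow up and slam-dunk the knot $K(p,m)=T(p,mp-1)$: since $K(p,m)$ is a torus knot, its $2$-handlebody with framing $p^2m-p-1$ can be converted by a sequence of blow-ups and handle slides into a standard linear chain, and the rational-homology-ball locus $B_p$ should sit inside this chain in the canonical Fintushel–Stern position. Concretely I expect the surgery description of $B_p$ to be the standard one, $B_p=$ (a $0$-framed unknot pierced by a $p$-framed meridian chain), so that cutting it out amounts to deleting those specific handles from the diagram of $M(p,m)$.

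Next I would glue in $C_p$. The plumbing $C_p$ is the linear chain of unknots with framings $(-p-2,-2,\dots,-2)$ (with $p-1$ copies of $-2$), and its attaching circles must be glued to $\partial(M(p,m)-B_p)$ by the identification of $\partial C_p$ with $\partial B_p$. The cleanest route is to present the whole surgered manifold $\widetilde{M}(p,m)$ as a single Kirby diagram — the leftover handles of $M(p,m)-B_p$ together with the chain $C_p$ — and then to \emph{convert this diagram into plumbing form} by a sequence of Rolfsen twists, handle slides, and blow-downs of $\pm1$-framed unknots. Each move is local and the framing/linking bookkeeping is routine; the output is an honest plumbing tree whose vertices I read off as the framings in Figure~\ref{figure:After-Q-BLUP}. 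To verify correctness of the end result I would cross-check two invariants that are preserved under the surgery: the signature and $b_2$ change in the way dictated by the rational-blow-up formula (signature decreases by $p-1$, $b_2^+$ fixed, so $b_2^-$ increases by $p-1$), and the boundary $\partial\widetilde{M}(p,m)$ must equal $\partial M(p,m)=L(p(mp-1)-1,p^2)$; both should match the plumbing graph claimed.

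The main obstacle I anticipate is the \emph{bookkeeping of the gluing diffeomorphism} rather than any conceptual difficulty: when $B_p$ is removed and $C_p$ is pasted back, one must track exactly how the attaching circles of the $C_p$-chain link the leftover $2$-handle of $M(p,m)$, and a wrong identification of $\partial B_p$ changes the resulting graph. I would therefore be careful to keep the meridians and the framing parameter $m$ explicit throughout, so that the torus-knot framing $p^2m-p-1$ propagates correctly into the framings of Figure~\ref{figure:After-Q-BLUP}; the appearance of $m$ in the final graph is the crucial feature that ultimately makes the embedding \emph{non}-simple in Corollary~\ref{corollary:non-trivial-QBLUP}, so getting its placement right is the heart of the computation.
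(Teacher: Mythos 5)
Your proposal contains the right raw ingredients (the decomposition $\widetilde{M}(p,m)=(M(p,m)-B_p)\cup_{\partial}C_p$, the embedding of $B_p$ from Theorem~\ref{theorem:embedded-QHB}, Kirby-calculus simplification to a plumbing), but the step that constitutes the actual mathematical content is deferred rather than carried out, and your fallback verification cannot close that gap. Concretely: ``deleting the $B_p$-handles and inserting the $C_p$-chain in their place'' is not a well-defined Kirby move. After removing $B_p$, the remaining handles of $M(p,m)$ are attached along curves in $\partial B_p=L(p^2,1-p)$, and to glue in $C_p$ you must redraw those attaching curves in a standard diagram of $\partial C_p$ --- i.e., you must track the identification of the lens space through an explicit sequence of moves. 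You correctly identify this ``bookkeeping of the gluing diffeomorphism'' as the main obstacle, but you offer no mechanism for resolving it; that bookkeeping \emph{is} the proof. Your proposed cross-check (signature, $b_2$, and the boundary lens space) cannot substitute for it: these invariants do not determine the diffeomorphism type of a compact $4$-manifold with boundary, so agreement with Figure~\ref{figure:After-Q-BLUP} would prove nothing. There are also factual slips that would derail the bookkeeping you rely on: $C_p$ has $p-2$ (not $p-1$) vertices of weight $-2$; the standard diagram of $B_p$ is a dotted unknot with a $(p-1)$-framed $2$-handle passing through it $p$ times (the paper's Figure~\ref{figure:Bp}), not ``a $0$-framed unknot pierced by a $p$-framed meridian chain''; and converting the torus knot $T(p,mp-1)$ into a linear chain and locating $B_p$ ``in the canonical Fintushel--Stern position'' would amount to using a possibly different embedding of $B_p$ than the one in Theorem~\ref{theorem:embedded-QHB}, and different embeddings can yield different blow-ups.

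For contrast, the paper avoids performing the cut-and-paste directly. It exhibits a candidate answer, the Kirby diagram of Figure~\ref{figure:Fig-FG}(A), and then argues in two steps: first, handle slides (following Gompf--Stipsicz) convert Figure~\ref{figure:Fig-FG}(A) into the plumbing of Figure~\ref{figure:After-Q-BLUP}, which in particular shows $C_p$ is embedded there; second, the complement of $B_p$ in Figure~\ref{figure:Fig-E} (which is $M(p,m)$) and the complement of $C_p$ in Figure~\ref{figure:Fig-FG}(A) are visibly the same handlebody attached by the same gluing map, which identifies Figure~\ref{figure:Fig-FG}(A) with the rational blow-up. This ``guess and verify by matching complements'' strategy is exactly what sidesteps the lens-space identification problem that your direct approach leaves open. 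If you want to salvage your route, you would need to either produce the explicit sequence of moves realizing the regluing, or restructure the argument along the paper's lines by proposing the target diagram first.
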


\begin{figure}
\centering
\begin{tikzpicture}
\node[bullet] (10) at (1,0) [label=above:{$-(p+2)$}] {};
\node[bullet] (20) at (2,0) [label=above:{$-2$}] {};

\node[empty] (250) at (2.5,0) [] {};
\node[empty] (30) at (3,0) [] {};

\node[bullet] (350) at (3.5,0) [label=above:{$-2$}] {};
\node[bullet] (450) at (4.5,0) [label=above:{$m-1$}] {};

\draw [-] (10)--(20);
\draw [-] (20)--(250);
\draw [dotted] (20)--(350);
\draw [-] (30)--(350);
\draw [-] (350)--(450);

\draw [thick, decoration={brace,raise=0.5em}, decorate] (350) -- (20)
node [pos=0.5,anchor=south,yshift=-2em] {$p-2$};
\end{tikzpicture}
\caption{Rational blow-up of $B_p$ in $M(p,m)$}
\label{figure:After-Q-BLUP}
\end{figure}
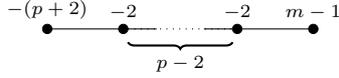

We prove it in Section~\ref{section:rational-blow-up}. We then show that the embedding of $B_p$ in Theorem~\ref{theorem:embedded-QHB} is not simple:

\begin{corollary}\label{corollary:non-trivial-QBLUP}
The rationally blown-up $4$-manifold $\widetilde{M}(p,m)$ cannot be obtained from $M(p,m)$ by any sequence of ordinary blow-ups of $M(p,m)$ either if $p$ is a positive even integer and $m$ is an odd integer or if $p \ge 2$ and $m-1 \le -2$.
\end{corollary}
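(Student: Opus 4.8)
The plan is to use the algebraic topology of the two manifolds—in particular the intersection form on $H_2$—to obstruct the existence of a blow-up sequence. A sequence of ordinary blow-ups of $M(p,m)$ produces a manifold whose intersection form is the original form of $M(p,m)$ orthogonally summed with copies of $\langle -1\rangle$ (one for each blow-up), since each blow-up adds an exceptional sphere of self-intersection $-1$ that is orthogonal to everything already present. So I would first record the intersection form of $\widetilde{M}(p,m)$ from the plumbing graph in Figure~\ref{figure:After-Q-BLUP}, and I would record the intersection form of any iterated blow-up of $M(p,m)$. Since $M(p,m)$ is a single $2$-handlebody, $H_2(M(p,m);\mathbb{Z}) \cong \mathbb{Z}$ with self-intersection $p^2m - p - 1$; after $k$ blow-ups the form becomes $\langle p^2m-p-1\rangle \oplus k\langle -1\rangle$. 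The rational blow-up keeps $b_2^+$ fixed and changes $b_2^-$; comparing Euler characteristics (equivalently $b_2$) fixes the number $k$ of blow-ups we must compare against, so the two candidate forms have the same rank and signature, and the question becomes whether they are isomorphic over $\mathbb{Z}$.

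**Next I would** compute the two invariants that distinguish indefinite integral forms once rank and signature agree: the parity (even vs.\ odd / type I vs.\ type II) and, when both are odd, finer data. The plumbing form of $\widetilde{M}(p,m)$ has a diagonal entry $m-1$ at the last vertex and the entry $-(p+2)$ at the first, so its type depends on the parities of $p$ and $m$; the blow-up form $\langle p^2m-p-1\rangle \oplus k\langle -1\rangle$ is always odd and its residues are controlled by $p^2m-p-1 \bmod$ (small primes). For the first case ($p$ even, $m$ odd) I expect to show that one of the forms is even while the other is odd, or that they differ modulo $2$ as quadratic forms over $\mathbb{Z}/2$, giving an immediate contradiction. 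For the second case ($p\ge 2$, $m-1\le -2$) the obstruction is instead definiteness or the sign of diagonal entries: every ordinary blow-up can only contribute a $-1$, so an iterated blow-up of $M(p,m)$ cannot produce a diagonal summand equal to $m-1$ when $m-1\le -2$ is too negative to be $-1$, yet the plumbing graph forces such a vertex; more carefully, the presence of the $(m-1)$-framed vertex means the form $\langle -1\rangle$ cannot split off the right number of times, and I would make this precise by diagonalizing the plumbing form and counting how many $-1$'s it admits as orthogonal summands.

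**The cleanest route** is to argue by contradiction: assume $\widetilde{M}(p,m)$ is obtained from $M(p,m)$ by blow-ups. Then there is an isometry of integral lattices between the plumbing form and $\langle p^2m-p-1\rangle \oplus k\langle-1\rangle$. I would then apply a lattice invariant that is preserved by isometry but takes different values on the two sides—the discriminant form (the finite quadratic form on the torsion of the dual lattice, or equivalently the linking form of the boundary lens space) is the natural candidate, since both manifolds have the same boundary $L(p(mp-1)-1,p^2)$ and hence compatible discriminant forms, forcing the comparison down to the mod-$2$ type and the $2$-adic symbols. For the parity case I would verify that the plumbing lattice and the blow-up lattice have different $2$-adic (or mod $2$) invariants; for the $m-1\le -2$ case I would instead use that an orthogonal sum with $\langle -1\rangle$'s cannot realize a vector of square $m-1$ that is primitive and orthogonal to the image of the original generator, a routine but essential linear-algebra check.

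**The main obstacle** will be the parity case, $p$ even and $m$ odd, because there both lattices are odd and indefinite of the same rank and signature, so by the classification of odd indefinite integral forms they are \emph{abstractly isomorphic as integral lattices}. The algebraic intersection form alone therefore cannot distinguish them; the obstruction must come from the structure of the blow-up relative to the fixed generator of $H_2(M(p,m))$—i.e.\ from the fact that the $\langle -1\rangle$ summands must come from disjoint embedded $(-1)$-spheres and must be orthogonal to the specified class of square $p^2m-p-1$. I expect to handle this by a more refined invariant than the isomorphism type of the form: either a characteristic-class / $\mathrm{Spin}^c$ computation (the set of square values of characteristic vectors, or a $d$-invariant / Rokhlin-type parity argument), or the observation that a blow-up sequence produces a $(-1)$-class orthogonal to the generator whose existence contradicts the specific entries of the plumbing graph. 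Making that refinement rigorous, rather than the routine diagonalizations, is where the real work of the proof lies.
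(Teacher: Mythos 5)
Your opening plan is in fact the paper's proof, but your closing paragraph retracts it on the basis of a parity miscalculation, and that retraction is the genuine gap. When $p$ is even and $m$ is odd, \emph{every} diagonal entry of the plumbing form of Figure~\ref{figure:After-Q-BLUP} is even: $-(p+2)$ and $-2$ because $p$ is even, and $m-1$ because $m$ is odd. A symmetric integer matrix with even diagonal defines an even form (off-diagonal entries enter every square with a factor of $2$), so the intersection lattice of $\widetilde{M}(p,m)$ is \emph{even} in this case --- not ``odd and indefinite of the same rank and signature,'' as you assert in your ``main obstacle'' paragraph, and in particular not abstractly isomorphic to $\langle p^2m-p-1\rangle \oplus (p-1)\langle -1\rangle$, which is odd. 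Consequently the plumbing lattice contains no class of square $-1$, whereas any sequence of ordinary blow-ups of $M(p,m)$ must consist of exactly $p-1\ge 1$ blow-ups (compare $b_2$) and therefore carries an exceptional class of square $-1$. That two-line comparison settles the first case. The $d$-invariant, $\mathrm{Spin}^c$, and characteristic-vector machinery that you declare to be ``where the real work of the proof lies'' is not needed; the premise from which you argue its necessity is false, and as written your proposal ends by abandoning a correct argument in favor of an unspecified one that is never carried out.

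For the case $m-1\le -2$ your idea points the right way but is stated backwards: the issue is not whether $\langle m-1\rangle$ or some number of $\langle -1\rangle$'s ``split off'' as orthogonal summands (diagonal entries of a Gram matrix are basis-dependent and need not be orthogonal summands), but whether the plumbing lattice contains \emph{any} vector of square $-1$. It does not: for a linear chain with vertices $v_1,\dots,v_n$ of weights $-c_i\le -2$ and $x=\sum a_i v_i$, one has the identity $-x\cdot x=\sum_i (c_i-2)a_i^2+a_1^2+a_n^2+\sum_{i=1}^{n-1}(a_i-a_{i+1})^2$, a sum of squares; if it equalled $1$, exactly one term would be $1$ and all others $0$, but vanishing of all the differences forces $a_1=\dots=a_n$, which makes $a_1^2$ and $a_n^2$ equal and rules out every possibility. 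So again the exceptional class of a blow-up gives the contradiction. In short, both cases of the corollary reduce to the single statement underlying the paper's proof --- the plumbing of Figure~\ref{figure:After-Q-BLUP} admits no $(-1)$-class, while every nontrivial blow-up does --- and your proposal, after correctly identifying this obstruction, incorrectly concludes that it fails in the parity case.
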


\begin{proof}
If $p$ is a positive even integer and $m$ is an odd integer, then $\widetilde{M}(p,m)$ is a manifold with an even intersection form. So there are no $(-1)$-classes in the plumbing as Figure~\ref{figure:After-Q-BLUP}. On the other hand, if $m$ is sufficiently small, precisely, if $m-1 \le -2$, then there are also no $(-1)$-classes in the plumbing.
\end{proof}

\subsection{Notions}
\label{section:notions}

The plumbing diagram of $C_p$ is given as below
\begin{equation*}
\begin{tikzpicture}
\node[bullet] (10) at (1,0) [label=above:{$-(p+2)$}] {};
\node[bullet] (20) at (2,0) [label=above:{$-2$}] {};

\node[empty] (250) at (2.5,0) [] {};
\node[empty] (30) at (3,0) [] {};

\node[bullet] (350) at (3.5,0) [label=above:{$-2$}] {};

\draw [-] (10)--(20);
\draw [-] (20)--(250);
\draw [dotted] (20)--(350);
\draw [-] (30)--(350);

\draw [thick, decoration={brace,raise=0.5em}, decorate] (350) -- (20)
node [pos=0.5,anchor=south,yshift=-2em] {$p-2$};
\end{tikzpicture}
\end{equation*}
And the rational homology ball $B_p$ is given as follows: Let $F_{p-1}$ ($p \ge 2$) be the Hirzebruch surface having the negative section $s_0$ with $s_0 \cdot s_0 = -(p-1)$. Let $s_{\infty}$ be a positive section with $s_{\infty} \cdot s_{\infty} = p-1$ and $f$ a fiber. Then $B_p$ is the complement of the pair of $2$-spheres represented by the homology classes $s_{\infty} + f$ and $s_0$ in $F_{p-1}$. The Kirby diagram for $B_p$ is given in Figure~\ref{figure:Bp} (cf. Gompf-Stipsicz~\cite[Figure~8.41, p.331]{Gompf-Stipsicz-1999}).

\begin{figure}[htp]
\centering
\includegraphics{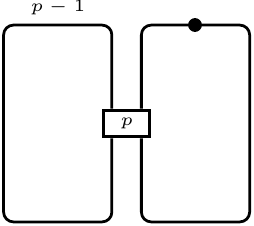}
\caption{A rational homology ball $B_p$}
\label{figure:Bp}
\end{figure}

\subsection*{Acknowledgements}

The authors would like to thank Andr\'as Stipsicz for reading the first draft of the paper and pointing out some unclear parts in definitions. We sincerely thank the referee for constructive criticisms and valuable comments (especially Remark~\ref{remark:referee} and \ref{remark:referee-2}), which were of great help in revising the manuscript. DS was supported by Chungnam National University in 2015. HP was supported by Basic Science Research Program through the National Research Foundation of Korea(NRF) funded by the Ministry of Science, ICT \& Future Planning (NRF-2015R1C1A2A01054769).

\section{Smoothly embedded rational homology balls}
\label{section:B_p}

This section is devoted to proof of Theorem~\ref{theorem:embedded-QHB}.

\begin{TheoremQHB}
For any $p \ge 2$ and $m \in \mathbb{Z}$, there is a smoothly embedded rational homology ball $B_p$ in the $2$-handlebody $M(p,m)$ associated to the knot $K(p,m)$.
\end{TheoremQHB}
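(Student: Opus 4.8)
The plan is to prove the embedding by exhibiting an explicit Kirby-calculus equivalence between the $2$-handlebody $M(p,m)$ and a handle picture in which the rational homology ball $B_p$ appears as a recognizable sub-diagram. The overarching strategy is to start from the standard Kirby diagram of $B_p$ in Figure~\ref{figure:Bp}, and show that after capping it off with a complementary $2$-handlebody (the ``outside'' handles) one recovers precisely the diagram of $M(p,m)$: a single $2$-handle attached along the knot $K(p,m)$ with framing $p^2m-p-1$. Equivalently, I would argue that $M(p,m)$ decomposes as $B_p \cup_{\partial B_p} W$ for some auxiliary piece $W$, so that $B_p$ is smoothly embedded with the required boundary.

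Concretely, I would first recall the handle description of $B_p$ from Gompf--Stipsicz (one $1$-handle and one $2$-handle, with the attaching circle of the $2$-handle running over the $1$-handle so that the result is a rational homology ball with lens-space boundary $L(p^2, p\cdot(\text{something}))$). Next I would track the boundary three-manifold: by Remark~\ref{remark:referee}, $\partial M(p,m) = L(p(mp-1)-1, p^2)$, so the key consistency check is that $B_p$ can sit inside $M(p,m)$ so that the pieces glue compatibly along their common boundary. Then the heart of the argument is a sequence of Kirby moves -- handle slides, blow-ups/blow-downs, and cancellation of a $1$-handle against a $2$-handle -- starting from the union $B_p \cup W$ and simplifying it down to the single-$2$-handle diagram attached along $K(p,m)$. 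Because $K(p,m)$ is the twisted torus knot $T(p,p(m-1)+1)_{p-1,1}$, I expect the twist region of $K(p,m)$ (with its $p-1$ parallel strands and the extra clasp) to emerge naturally from the $1$-handle of $B_p$ together with the $(p-1)$-fold wrapping prescribed by $s_0 \cdot s_0 = -(p-1)$ in the Hirzebruch surface $F_{p-1}$.

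I would organize the Kirby calculus by first handling the generic template and only afterward specializing the framing coefficient. The parameter $m$ enters purely through the number of full twists inserted into the strands, and the framing $p^2m - p - 1$ should drop out of the linking-number bookkeeping once the $1$-handle is cancelled: each pass of the attaching circle of the $2$-handle of $B_p$ over the $1$-handle contributes a controlled amount to the self-linking, and summing these contributions over the $p-1$ strands together with the $m$ twists is what produces the quadratic $p^2 m$ term. I would present this computation as a framing/linking-number tally rather than grinding through every intermediate diagram, recording only the net change at each blow-down.

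The main obstacle I anticipate is controlling the framing arithmetic through the handle-cancellation step: one must verify that after sliding the $2$-handle of $B_p$ off the $1$-handle and cancelling, the attaching curve is genuinely isotopic to $K(p,m)$ \emph{and} carries exactly framing $p^2m - p - 1$, with no discrepancy hidden in the twisting of the framing curve relative to Seifert framing. A careful accounting of the self-linking number through each blow-up/blow-down -- in particular keeping track of the sign and multiplicity of the clasp in the twisted torus knot -- will be the delicate part, and I would verify it first in the base case $m=0$ (where $K(p,0)$ is the unknot and the statement reduces to Khodorovskiy's embedding near a $(-p-1)$-sphere) before confirming that the general $m$ merely adds the twists that adjust the framing to $p^2 m - p - 1$.
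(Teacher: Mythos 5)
Your proposal takes essentially the same route as the paper: the paper's proof exhibits a Kirby diagram (Figure~\ref{figure:Fig-C}) in which $B_p$ sits as a visible sub-diagram alongside an auxiliary $m$-framed $2$-handle, then performs a $p$-fold handle slide of the $(p-1)$-framed $2$-handle over the $m$-framed one and cancels the $1$-handle against the $m$-framed $2$-handle to recover the single $(p^2m-p-1)$-framed knot $K(p,m)$ --- precisely your decomposition $M(p,m) = B_p \cup_{\partial B_p} W$ established by handle slides and $1$-handle/$2$-handle cancellation. The differences (your proposed blow-ups/blow-downs and the $m=0$ base-case check) are inessential; the paper needs only slides and cancellation.
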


\begin{figure}
\centering
\subfloat[]{\includegraphics{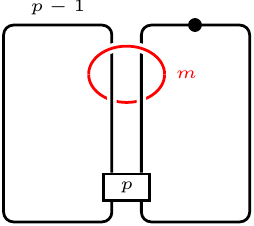}} \qquad \qquad
\subfloat[]{\includegraphics{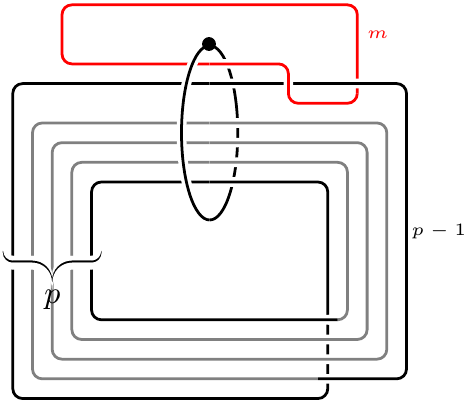}}
\caption{Proof of Theorem~\ref{theorem:embedded-QHB}}
\label{figure:Fig-C}
\end{figure}

\begin{figure}
\centering
\includegraphics{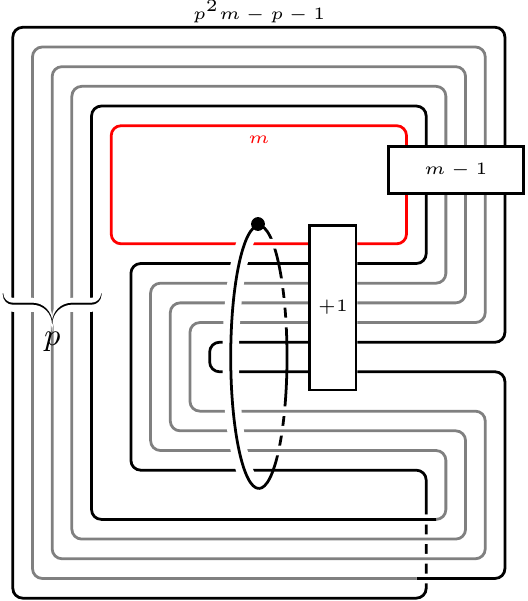}
\caption{Proof of Theorem~\ref{theorem:embedded-QHB}}
\label{figure:Fig-B}
\end{figure}

\begin{proof}
In Figure~\ref{figure:Fig-C}, the picture (B) is isotopic to (A). We apply $p$-multiple handle slide of the $(p-1)$-framed $2$-handle over the $m$-framed $2$-handle to Figure~\ref{figure:Fig-C}(B) so that we get Figure~\ref{figure:Fig-B}. Then we cancel the $1$-handle and the $m$-framed $2$-handle in Figure~\ref{figure:Fig-B}. After all, we get the $(p^2m-p-1)$-framed knot $K(p,m)$ in Figure~\ref{figure:Knot}. Note that there is a rational homology ball $B_p$ in Figure~\ref{figure:Fig-C}. Therefore $B_p$ is embedded in the $2$-handlebody associated to the knot $K(p,m)$.
\end{proof}

\section{Rational blow-ups along the rational homology balls}
\label{section:rational-blow-up}

This section is devoted to proof of Theorem~\ref{theorem:non-trivial-QBLUP}

\begin{TheoremQBLUP}
Let $\widetilde{M}(p,m)$ be the rationally blown-up $4$-manifold from $M(p,m)$. Then $\widetilde{M}(p,m)$ is the plumbing manifold with the following plumbing graph
\begin{center}
\begin{tikzpicture}
\node[bullet] (10) at (1,0) [label=above:{$-(p+2)$}] {};
\node[bullet] (20) at (2,0) [label=above:{$-2$}] {};

\node[empty] (250) at (2.5,0) [] {};
\node[empty] (30) at (3,0) [] {};

\node[bullet] (350) at (3.5,0) [label=above:{$-2$}] {};
\node[bullet] (450) at (4.5,0) [label=above:{$m-1$}] {};

\draw [-] (10)--(20);
\draw [-] (20)--(250);
\draw [dotted] (20)--(350);
\draw [-] (30)--(350);
\draw [-] (350)--(450);

\draw [thick, decoration={brace,raise=0.5em}, decorate] (350) -- (20)
node [pos=0.5,anchor=south,yshift=-2em] {$p-2$};
\end{tikzpicture}
\end{center}
\end{TheoremQBLUP}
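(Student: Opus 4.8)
The plan is to perform the rational blow-up explicitly at the level of Kirby diagrams, reusing the handle decomposition produced in the proof of Theorem~\ref{theorem:embedded-QHB}. Recall that by definition the rationally blown-up manifold is $\widetilde{M}(p,m) = (M(p,m)\setminus B_p)\cup_{\partial}C_p$, where $\partial B_p = \partial C_p$ is the lens space of Remark~\ref{remark:referee}. I would therefore start from the Kirby diagram of $M(p,m)$ in which $B_p$ appears as an explicit sub-handlebody, namely the diagram of Figure~\ref{figure:Fig-C}: there $B_p$ is recognizable by comparison with its standard picture in Figure~\ref{figure:Bp}, and the surgery can be carried out as a purely local modification, leaving the rest of the diagram (in particular the $m$-framed $2$-handle) fixed.

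Concretely, I would first record the standard plumbing handle presentation of $C_p$ given in Section~\ref{section:notions}, i.e.\ the linear chain of framed unknots with weights $-(p+2),-2,\dots,-2$ ($p-2$ copies of $-2$). Because $B_p$ and $C_p$ carry fixed handle decompositions inducing the same identification of their common boundary, the rational blow-up is realized by deleting the handles of $B_p$ from Figure~\ref{figure:Fig-C} and inserting this chain in their place. The result is a Kirby diagram of $\widetilde{M}(p,m)$ consisting of the chain $C_p$ together with the one surviving $2$-handle inherited from $M(p,m)$, whose attaching circle now links the end of the chain instead of running through the $1$-handle of $B_p$.

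I would then reduce this diagram to plumbed form by Kirby moves---handle slides together with the reinterpretation of the old linking as a plumbing edge---essentially reversing the slides and cancellations used in Theorem~\ref{theorem:embedded-QHB}. The surviving $2$-handle should detach as a single vertex adjacent to the terminal $-2$-vertex of the chain, and identifying the outcome with Figure~\ref{figure:After-Q-BLUP} finishes the argument.

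The main obstacle is the framing bookkeeping in this last reduction. One must check that the surviving handle attaches to the correct end of the $C_p$-chain and, above all, that its framing drops by exactly one---so that the terminal weight is $m-1$ rather than $m$. Pinning this down requires computing the relevant linking numbers of the surviving circle with the handles of $C_p$ and tracking how its framing transforms once the $1$-handle of $B_p$ is traded for the plumbing chain; the sign and the precise $-1$ correction are the delicate points, and everything else in the proof is routine handle calculus.
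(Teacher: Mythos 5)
Your overall framework---performing the rational blow-up by Kirby calculus on the diagram of $M(p,m)$ in which $B_p$ is visible---matches the paper's, but your proposal stops exactly where the content of the theorem begins. The step you describe as deleting the handles of $B_p$ from Figure~\ref{figure:Fig-C} and inserting the chain of $C_p$ ``as a purely local modification, leaving the rest of the diagram fixed'' is not a legitimate diagrammatic move as stated: removing $B_p$ and regluing $C_p$ is a cut-and-paste along a lens space, governed by a choice of diffeomorphism $\partial B_p \cong \partial C_p$, and there is no a priori rule for how the attaching circle and framing of the surviving $m$-framed $2$-handle reappear relative to the inserted chain. Your justification---that $B_p$ and $C_p$ ``carry fixed handle decompositions inducing the same identification of their common boundary''---has no content: handle decompositions do not by themselves single out an identification of the two boundaries, and the effect of the surgery on the rest of the diagram depends entirely on that identification. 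In particular, linking numbers of the old attaching circle with the \emph{deleted} handles of $B_p$ do not determine how it links the \emph{inserted} handles of $C_p$; that is precisely what must be proved. You flag this yourself (``the sign and the precise $-1$ correction are the delicate points'') but offer no mechanism to resolve it, so the claim that the surviving handle becomes an $(m-1)$-framed vertex adjacent to the terminal $-2$-vertex---which is the entire statement of the theorem---is left unestablished.

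The paper avoids this forward computation altogether by a guess-and-verify argument: it exhibits the candidate answer, Figure~\ref{figure:Fig-FG}(A), shows by two handle slides (following Gompf-Stipsicz~\cite[12.67, 12.68, p.516]{Gompf-Stipsicz-1999}) that this manifold is the plumbing of Figure~\ref{figure:After-Q-BLUP} and hence contains $C_p$, and then checks that the complement of $C_p$ there coincides with the complement of $B_p$ in Figure~\ref{figure:Fig-E} and that the two complements are glued in by the same map. That ``same complement, same gluing'' verification is exactly what your local-replacement step assumes implicitly. To repair your proof you would either have to carry out this verification---in effect reproducing the paper's argument, or the computation in the proof of Theorem~5.1(3) of Akbulut-Yasui~\cite{Akbulut-Yasui-2008}---or else establish a general replacement rule for trading the standard picture of $B_p$ for the chain of $C_p$ in the presence of extra strands running through the region, which is not routine handle calculus.
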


\begin{figure}
\centering
\includegraphics{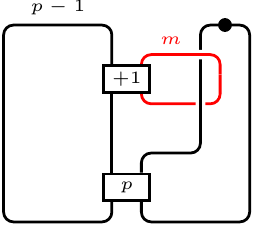}
\caption{Proof of Theorem~\ref{theorem:non-trivial-QBLUP}}
\label{figure:Fig-E}
\end{figure}

\begin{figure}
\centering
\subfloat[]{\includegraphics{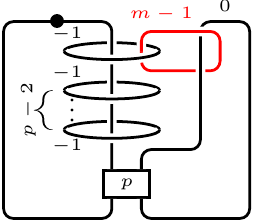}} \qquad \qquad
\subfloat[]{\includegraphics{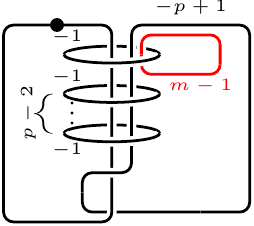}}
\caption{Proof of Theorem~\ref{theorem:non-trivial-QBLUP}}
\label{figure:Fig-FG}
\end{figure}

\begin{proof}
Note that Figure~\ref{figure:Fig-C}(A) is equal to Figure~\ref{figure:Fig-E}. We will show that Figure~\ref{figure:Fig-FG}(A) can be obtained from Figure~\ref{figure:Fig-E} by rationally blowing up $B_p$ in Figure~\ref{figure:Fig-E}.

At first, we claim that there is $C_p$ embedded in Figure~\ref{figure:Fig-FG}(A): As in Gompf-Stipsicz~\cite[12.67, 12.68, p.516]{Gompf-Stipsicz-1999}, we do a handle slide to Figure~\ref{figure:Fig-FG}(A) to get Figure~\ref{figure:Fig-FG}(B), and then, we apply another handle slide to Figure~\ref{figure:Fig-FG}(B) so that we get Figure~\ref{figure:After-Q-BLUP}, which implies that $C_p$ is embedded in Figure~\ref{figure:Fig-FG}(A).

Next, it is clear that the complement of $B_p$ in the $4$-manifold represented by Figure~\ref{figure:Fig-E} and that of $C_p$ in Figure~\ref{figure:Fig-FG}(A) are the same. Furthermore it is easy to see from the Kirby diagrams Figure~\ref{figure:Fig-E} and Figure~\ref{figure:Fig-FG}(A) that the two complements are glued to the boundary of $B_p$ in Figure~\ref{figure:Fig-E} and that of $C_p$ in Figure~\ref{figure:Fig-FG}(A), respectively, by the same gluing map. Therefore one can conclude that Figure~\ref{figure:After-Q-BLUP} is obtained from Figure~\ref{figure:Fig-C}(A) by a rational blowing-up.
\end{proof}

\begin{remark}\label{remark:referee-2}
The main part of the above proof is that the rational blow-up of Figure~\ref{figure:Fig-C}(A) is diffeomorphic to the plumbing $4$-manifold with the plumbing graph of Figure~\ref{figure:After-Q-BLUP}. The referee informed us that this result appears also in the proof of Theorem~5.1 (3) in Akbulut-Yasui~\cite{Akbulut-Yasui-2008}.
\end{remark}


\end{document}